\documentclass[11pt,letterpaper]{amsart}
\usepackage[T1]{fontenc}
\usepackage[utf8]{inputenc}
\usepackage{lmodern}
\usepackage{amsmath,amssymb,amsthm,mathtools,mathrsfs}
\usepackage{microtype}
\usepackage[margin=1.25in]{geometry}
\usepackage[hidelinks]{hyperref}
\usepackage{tikz}                 
\usetikzlibrary{arrows.meta}       
\usepackage{placeins}             
\hypersetup{
  pdftitle={Dyer's conjecture on bounded joins in weak order},
  pdfauthor={Yibo Gao, Yulin Peng, Hanlin Xu},
  pdfkeywords={Coxeter groups, weak order, Bruhat order, inversion sets}
}

\newtheorem{theorem}{Theorem}[section]
\newtheorem{prop}[theorem]{Proposition}
\newtheorem{lemma}[theorem]{Lemma}
\newtheorem{cor}[theorem]{Corollary}
\theoremstyle{definition}
\newtheorem{conjecture}[theorem]{Conjecture}
\theoremstyle{remark}
\newtheorem{remark}[theorem]{Remark}
\newtheorem{example}[theorem]{Example}

\DeclareTextFontCommand{\emph}{\color{blue}\em}
\DeclareMathOperator{\cl}{cl}
\DeclareMathOperator{\cone}{cone}
\newcommand{\R}{\mathbb R}
\newcommand{\RR}{\mathcal R}
\newcommand{\BC}{\mathscr B}
\newcommand{\ccl}{\cl_2}
\newcommand{\A}{\mathcal A}
\newcommand{\C}{\mathcal C}
\newcommand{\s}{\mathbf{s}}
\newcommand{\pp}{\Phi^+}
\newcommand{\covB}{\lessdot_B}
\newcommand{\covL}{\lessdot_L}
\newcommand{\eps}{\varepsilon}

\title{Bounded joins of biclosed sets}
\author{Yibo Gao}
\address{Beijing International Center for Mathematical Research, Peking University, Beijing, China}
\email{gaoyibo@bicmr.pku.edu.cn}
\author{Yulin Peng}
\address{Qiuzhen College, Tsinghua University, Beijing, China}
\email{pyl24@mails.tsinghua.edu.cn}
\author{Hanlin Xu}
\address{School of Mathematical Sciences, Peking University, Beijing, China}
\email{2401110030@stu.pku.edu.cn}
\date{September 19, 2026}

\begin{document}

\begin{abstract}
Dyer conjectured that the join of two biclosed sets of positive roots can be described by increasing Bruhat paths whose reflection labels belong to their union. We give a type-uniform proof of this conjecture for all finite Coxeter groups. More generally, for a $2$-coclosed set $C$ of positive roots contained in an inversion set, we show that its $2$-closure is an inversion set $I(w)$ and that the elements reachable from the identity using reflections labeled by roots in $C$ form exactly the set $[e,w]_B w^{-1}$. The proof combines Dyer's closure criteria with a root-selection argument.
\end{abstract}

\maketitle
\section{Introduction}\label{sec:intro}
Let $(W,S)$ be a Coxeter system with its standard real root system $\Phi=\pp\sqcup-\pp$. For $w\in W$, write $I(w)=\{\alpha\in\pp:w^{-1}\alpha\in-\pp\}$ for its \emph{(left) inversion set}. The \emph{right and left weak orders}, denoted by $\leq_R$ and $\leq_L$, respectively, are defined by prefixes and suffixes of reduced expressions: $u\le_R w$ if some reduced expression for $u$ is a prefix of a reduced expression for $w$, and $u\le_L w$ if some reduced expression for $u$ is a suffix of a reduced expression for $w$.

It is classical that $u\le_R w$ if and only if $I(u)\subseteq I(w)$, and that the weak order of a finite Coxeter group is a lattice. More generally, for an arbitrary Coxeter system, weak order is a complete meet-semilattice \cite[Theorem~3.2.1]{bjornerCombinatoricsCoxeterGroups2005}. For infinite Coxeter groups, the meet always exists, while the existence of a join requires an upper bound.

The description of weak order by containment of inversion sets suggests describing joins in terms of roots. A subset of $\pp$ is \emph{$2$-closed} if it contains every positive root in the nonnegative span of any two of its members. Its \emph{$2$-closure} is denoted by $\ccl$. A set is \emph{$2$-coclosed} if its complement in $\pp$ is $2$-closed, and \emph{$2$-biclosed} if both it and its complement are $2$-closed. Dyer proved that whenever $u\vee_R v$ exists, its inversion set is given by $I(u\vee_R v)=\ccl(I(u)\cup I(v))$, and the same description holds for arbitrary set of elements of $W$ with a common upper bound \cite[Theorem~1.5]{dyerWeakOrderCoxeter2019}.

Let $\BC(\pp)$ be the inclusion poset of $2$-biclosed subsets of $\pp$, called the \emph{extended weak order}. Its finite elements are precisely the inversion sets, so it contains the weak order on $W$ as an order ideal and coincides with it when $W$ is finite \cite[\S2.3]{dyerWeakOrderCoxeter2019}. Dyer conjectured that $\BC(\pp)$ is always a complete lattice, with joins given by the $2$-closure of unions \cite[\S2.5]{dyerWeakOrderCoxeter2019}. Barkley and Speyer classified biclosed sets in affine root systems and established the lattice property in types $\widetilde A$ and $\widetilde C$ \cite{barkleyCombinatorialBiclosedAffine2024}. They subsequently proved the complete lattice property and the closure formula for joins in all affine types \cite{barkleyAffineExtendedWeakOrder2023}. Beyond affine type, Barkley, Defant, Hersh, McCammond, McConville, and Speyer proved these assertions for the rank-three universal Coxeter group \cite{barkleyExtendedWeakOrderUniversal2026}. The conjecture studied here asks for a further description of these joins through increasing paths in the Bruhat graph.

We write $\le_B$ for the Bruhat order and $[e,w]_B=\{y\in W:y\le_B w\}$. For $C\subseteq\pp$, let $\RR(C)$ be the set of endpoints of paths starting at $e$ whose steps have the form $u\longrightarrow ut_\alpha$, with $\alpha\in C$ and $\ell(ut_\alpha)>\ell(u)$. Here $t_\alpha$ is the reflection corresponding to $\alpha$. We call $\RR(C)$ the \emph{reachable set} of $C$, and the \emph{Bruhat preclosure} of $C$ is $\overline C=\{\alpha\in\pp:t_\alpha\in\RR(C)\}$.  Dyer conjectured a description of the join of two biclosed sets using Bruhat preclosure. 

\begin{conjecture}[Dyer {\cite[\S2.8]{dyerWeakOrderCoxeter2019}}]\label{conj:dyer}
For $E,F\in\BC(\pp)$, the set $\overline{E\cup F}$ is the join of $E$ and $F$ in $\BC(\pp)$.
\end{conjecture}

We prove this conjecture whenever the union is contained in an inversion set. This includes all finite Coxeter groups and all pairs of inversion sets with a common upper bound in arbitrary Coxeter groups. Our main theorem applies more generally to any $2$-coclosed set $C$ that is \emph{bounded}, meaning that $C\subseteq I(x)$ for some $x\in W$. The case of arbitrary unbounded biclosed sets is not addressed here.

\begin{theorem}\label{thm:main}
Let $(W,S)$ be any Coxeter system with its standard real root system. Suppose that $C\subseteq\pp$ is $2$-coclosed and that $C\subseteq I(x)$ for some $x\in W$. Then there is a unique $w\in W$ such that $\ccl(C)=I(w)$. Moreover, $w\le_R x$ and
\begin{equation}\label{eq:main}
 \RR(C)=\RR(I(w))=[e,w]_B w^{-1},\qquad
 \overline C=\ccl(C)=I(w).
\end{equation}
\end{theorem}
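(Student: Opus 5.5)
I will organize the proof around the set $D=\ccl(C)$, and argue in four steps.

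First, I will show that $D$ is an inversion set below $x$. Since $I(x)$ is $2$-closed and contains $C$, we have $D\subseteq I(x)$, so $D$ is finite. I will then show that $D$ is $2$-coclosed by invoking Dyer's criterion: the $2$-closure of a $2$-coclosed set contained in a biclosed set is again coclosed (equivalently, biclosed) \cite{dyerWeakOrderCoxeter2019}. If that criterion is unavailable in the needed form, I will argue directly, by induction along the iterative closure, using rank-two dihedral reflection subgroups. In each such subgroup, both $C$ and $I(x)$ restrict to initial or final segments of the dihedral order, and coclosedness of $C$ rules out the bad configurations. A finite biclosed set is an inversion set, so $D=I(w)$ for some $w$. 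Uniqueness holds because inversion sets determine elements, and $w\le_R x$ follows from $I(w)\subseteq I(x)$.

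Second, I will prove the easy inclusions of \eqref{eq:main}: $\RR(C)\subseteq\RR(I(w))\subseteq[e,w]_Bw^{-1}$. The first is immediate from $C\subseteq I(w)$. For the second, I will show by induction on the path that every reachable $u$ satisfies $uw\le_B w$ with $\ell(uw)=\ell(w)-\ell(u)$. Note that $[e,w]_Bw^{-1}$ is characterized as the set of $u$ with $uw\le_B w$, and the length count should hold in the form $\ell(u^{-1}\cdot)$. The key point is that for $\alpha\in I(w)$ with $\ell(ut_\alpha)>\ell(u)$, one has $ut_\alpha w<_B uw$. I expect this to follow from a sign computation showing $(uw)^{-1}u\alpha$ is negative, using the inductive hypothesis that $I(u^{-1})$ sits suitably inside $I(w)$. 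I will fix the exact invariant during this step, perhaps using the invariant $I(u^{-1})\subseteq$ some $w$-twisted set.

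Third, and this is the main obstacle, I will prove the reverse inclusion $[e,w]_Bw^{-1}\subseteq\RR(C)$ using only roots of $C$. I plan to induct on $\ell(w)-\ell(y)$ for $y\le_B w$, building the path backwards from $yw^{-1}$ to $e$. This is the root-selection argument: for $y<_B w$, I need a root $\alpha\in C$ such that $yw^{-1}t_\alpha$ is shorter and corresponds to $y'=\ldots$ with $y<_By'\le_B w$. The candidates are the Bruhat covers of $y$ inside $[e,w]$, which give reflections whose roots lie in $I(w)=\ccl(C)$. If such a root lies in $I(w)\setminus C$, it is a nonnegative combination of two roots of $D$, and I will exchange it within the rank-two subgroup they generate. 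By the dihedral structure, together with coclosedness of $C$ (the complement of $C$ is closed, so not both generating roots can lie outside $C$), there is an extreme root of that dihedral segment lying in $C$ that also gives an upward Bruhat step from $y$. To guarantee termination I will choose the root minimizing the depth of the closure iteration, using Dyer's closure criteria for the Bruhat graph, in particular the fact that in dihedral intervals the labels along increasing paths are well understood.

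Finally, since $\RR(C)=[e,w]_Bw^{-1}$, we have $t_\alpha\in\RR(C)$ if and only if $t_\alpha w\le_B w$, which holds if and only if $\alpha\in I(w)$. Hence $\overline C=I(w)=\ccl(C)$.
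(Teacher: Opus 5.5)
Your overall architecture matches the paper's: show the closure is an inversion set, prove the easy inclusions, then run a descending induction on $y\in[e,w]_B$ as in Lemma~\ref{lem:easyreduct}. But Step~3, which carries the whole difficulty, has a genuine gap. You never produce, for $y<_B w$, a root $\beta\in I_R(w)$ with $y<_B yt_\beta\le_B w$ and $-w\beta\in C$.

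Your use of coclosedness is a non sequitur. Suppose $\gamma=-w\beta\in I(w)\setminus C$ lies in $\cone(\gamma_1,\gamma_2)$. Coclosedness only says that $\gamma_1,\gamma_2\notin C$ would force $\gamma\notin C$, which already holds, so it says nothing about the generators. At deeper stages of the iterated closure the generators need not lie in $C$ at all. Also, not every cover $y\covB yt_\beta\le_B w$ has $\beta\in I_R(w)$ (e.g.\ $e\covB s\le_B st$ in type $A_2$), so such covers do not automatically give roots of $I(w)$ or increasing steps.

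Most seriously, nothing controls the Bruhat condition for the exchanged root, and dihedral structure plus membership of the generators in $C$ cannot supply it. Here is a counterexample in type $B_2$:
- Take $w=sts$, so $I(w)=I_R(w)=\{\alpha_s,s\alpha_t,t\alpha_s\}$ with $s\alpha_t\in\cone(\alpha_s,t\alpha_s)$, and take $y=s$.
- The only $\beta\in I_R(w)$ with $s<_B st_\beta\le_B w$ is $\beta=s\alpha_t$, and $-w\beta=s\alpha_t$.
- The extreme roots $\alpha_s$ and $t\alpha_s$ of that dihedral segment correspond to $\beta=t\alpha_s$ and $\beta=\alpha_s$. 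These give $s\cdot tst=stst\not\le_B w$ and $s\cdot s=e<_B s$.
- So for $C=\{\alpha_s,t\alpha_s\}$ we have $\ccl(C)=I(w)$ and both generators in $C$, yet no admissible step from $y$ exists. Indeed $sw^{-1}=ts\notin\RR(C)$.

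This $C$ is not coclosed. So coclosedness must be used globally to force a suitable root itself into $C$, not merely to put a generator into $C$. Your depth-based termination presupposes that every exchange yields an admissible step, which is exactly what fails.

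The paper fills this gap with two ideas absent from your sketch.
- A sufficient criterion for membership in $C$ (Lemma~\ref{lem:mandatory}). If $\C_w(\beta)$ has a least element, equivalently $\A_w(-w\beta)$ has a greatest element $v$, then cone separation together with \eqref{eq:plus} gives $\ccl(D(w)\cup\{-w\beta\})=D(v)$. Coclosedness, used correctly, then forces $-w\beta\in C$. Otherwise the closed set $\pp\setminus C$ contains $D(v)$, so $I(w)=\ccl(C)\subseteq I(v)$, contradicting $-w\beta\notin I(v)$.
- A selection rule securing both conditions at once (Proposition~\ref{prop:rootselect}). Take $(c,\beta)$ with $c\le_L w$, $\beta\notin I_R(y)$, $ct_\beta\covL c$ and $\ell(c)$ minimal. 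Braid-move invariance (Lemma~\ref{lem:braid}, with closedness of $I_R(y)$ ruling out the interior case) shows $c=\min\C_w(\beta)$. Minimality also gives $ct_\beta\le_L y$, and the lifting property then yields $y\covB yt_\beta\le_B w$.

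Two smaller points. In Step~1, because $\pp$ itself is biclosed, your hypothesis is vacuous. The statement you cite would then say that the closure of every coclosed set is coclosed, which contains Dyer's conjectured join formula for arbitrary biclosed sets, so it cannot simply be cited. For $C\subseteq I(x)$ you need an argument like Proposition~\ref{prop:closure}, which your dihedral fallback does not provide.

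In Step~2, the invariant $\ell(uw)=\ell(w)-\ell(u)$ is false: in Example~\ref{ex:affine-a2-join}, $\ell(t_{\alpha_0+\delta})=5>\ell(w)=4$. It is also unnecessary. Since $u\alpha\in\pp$ and $(uw)^{-1}u\alpha=w^{-1}\alpha\in\Phi^-$, you get $ut_\alpha w=t_{u\alpha}uw<_B uw$. So the invariant $uw\le_B w$ alone propagates, exactly as in Lemma~\ref{lem:IWsubsetew}.
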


\begin{cor}\label{cor:joins}
Let $(E_i)_{i\in J}$ be a family of sets in $\BC(\pp)$ such that $\bigcup_{i\in J} E_i$ is bounded.
Then $\bigvee_{i\in J} E_i$ exists in $\BC(\pp)$ and is given by
$\overline{\bigcup_{i\in J}E_i}$. In particular, if $\Phi^+$ is finite, then $\overline{\bigcup_{i\in J}E_i}$ is the join of $(E_i)_{i\in J}$ in $\BC(\pp)$.
\end{cor}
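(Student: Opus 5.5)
The plan is to deduce the corollary directly from Theorem~\ref{thm:main}, applied to $C=\bigcup_{i\in J}E_i$. The argument has three steps: check that $C$ satisfies the hypotheses of the theorem, identify the resulting set as an upper bound of the $E_i$, and verify that it is the least one.

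First, $C$ is $2$-coclosed. Since each $E_i\in\BC(\pp)$, each complement $\pp\setminus E_i$ is $2$-closed. Hence
\[
\pp\setminus C=\bigcap_{i\in J}(\pp\setminus E_i),
\]
and this intersection is $2$-closed. Indeed, if $\alpha,\beta$ lie in every $\pp\setminus E_i$, then every positive root in the nonnegative span of $\alpha$ and $\beta$ also lies in each of them. By hypothesis $C$ is bounded, say $C\subseteq I(x)$. So Theorem~\ref{thm:main} applies and gives $w\in W$ with
\[
\overline C=\ccl(C)=I(w).
\]

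Next, $I(w)$ is an upper bound in $\BC(\pp)$. As recalled in the introduction, inversion sets are exactly the finite elements of $\BC(\pp)$, so $I(w)$ is $2$-biclosed. Moreover $E_i\subseteq C\subseteq\ccl(C)=I(w)$ for every $i\in J$.

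For minimality, let $B\in\BC(\pp)$ contain every $E_i$. Then $B\supseteq C$, and $B$ is $2$-closed. Since $\ccl(C)$ is the smallest $2$-closed set containing $C$, we get $B\supseteq\ccl(C)=I(w)$. Therefore $I(w)=\overline{\bigcup_i E_i}$ is the join $\bigvee_{i\in J}E_i$ in $\BC(\pp)$.

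For the final sentence, suppose $\pp$ is finite. Then $W$ is finite and has a longest element $w_0$ with $I(w_0)=\pp$. Every union $\bigcup_i E_i$ is contained in $I(w_0)$, so it is bounded, and the first part applies.

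There is no real obstacle here, since all the substance lies in Theorem~\ref{thm:main}. The only points needing care are that coclosedness, unlike biclosedness, is preserved under arbitrary unions, and that $\ccl$ is the least $2$-closed superset.
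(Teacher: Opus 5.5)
Your proposal is correct and follows the paper's proof: apply Theorem~\ref{thm:main} to $C=\bigcup_{i\in J}E_i$ after noting that coclosedness passes to arbitrary unions, then use that $I(w)=\ccl(C)$ is biclosed and is contained in every closed superset of $C$. You also spell out the finite case through $I(w_0)=\pp$, which the paper leaves implicit.
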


The formula $E\vee F=\overline{E\cup F}$ conjectured by Dyer is false
for general biclosed sets in infinite Coxeter groups, as the following example shows.

\begin{example}\label{ex:affine-a2-counterexample}
Let $W$ be a Coxeter group of type $\widetilde A_2$, with simple
reflections $s_0,s_1,s_2$. Set $E=I(s_2s_1)$, $F=I(s_0)$, and
$C=E\cup F=\{\alpha_0,\alpha_2,\alpha_1+\alpha_2\}$.
We abbreviate $i_1\cdots i_k=s_{i_1}\cdots s_{i_k}$.

The increasing path $e\to0\to02\to020$ shows that
$\alpha_0+\alpha_2\in\overline C$.
Also, $\alpha_1+\alpha_2\in C\subseteq\overline C$.
However, their sum $\gamma=\alpha_0+\alpha_1+2\alpha_2$, whose corresponding reflection is $t_\gamma=20102$, is a positive
root not in $\overline{C}$.
Indeed, all maximal decreasing paths from $20102$ obtained by right multiplication by $a=s_0$, $b=s_2$, or $c=s_2s_1s_2$ are shown in Figure \ref{fig:affine-a2-counterexample}. Therefore, $\overline{C}$ is not biclosed, hence cannot be the join of $E$ and $F$ in $\BC(\pp)$.
\end{example}

\begin{figure}[htbp]
\centering
\begin{tikzpicture}[x=1cm,y=0.95cm,>=Stealth,
  vertex/.style={circle,fill=black,inner sep=1.7pt},
  every edge/.style={draw,->,semithick},
  every label/.style={font=\small},
  lab/.style={font=\small,inner sep=1pt}]

  \node[vertex,label=above:{$20102$}]
    (top) at (0,4) {};
  \node[vertex,label=left:{$2010$}]
    (left4) at (-1.65,3) {};
  \node[vertex,label=left:{$201$}]
    (left3) at (-1.65,2) {};
  \node[vertex,label=right:{$2102$}]
    (right4) at (1.65,3) {};
  \node[vertex,label=right:{$210$}]
    (right3) at (1.65,2) {};
  \node[vertex,label=right:{$21$}]
    (right2) at (1.65,1) {};
  \node[vertex,label=right:{$1$}]
    (right1) at (1.65,0) {};

  \draw[semithick] (left3) circle[radius=4pt];
  \draw[semithick] (right1) circle[radius=4pt];

  \path
    (top) edge node[lab,sloped,above=1pt] {$b$} (left4)
    (top) edge node[lab,sloped,above=1pt] {$c$} (right4)
    (left4) edge[shorten >=3pt]
      node[lab,left=2pt] {$a$} (left3)
    (right4) edge node[lab,right=2pt] {$b$} (right3)
    (right3) edge node[lab,right=2pt] {$a$} (right2)
    (right2) edge[shorten >=3pt]
      node[lab,right=2pt] {$c$} (right1);

\end{tikzpicture}
\caption{All decreasing paths from $20102$ using the right reflection
labels $a,b,c$. Neither circled endpoint is $e$.}
\label{fig:affine-a2-counterexample}
\end{figure}

Biagioli and Perrone~\cite{biagioliConjectureDyerJoin2026} proved the conjecture for finite dihedral groups and groups of type $A$, and verified it computationally in types $H_3$ and $F_4$. Dermenjian~\cite{dermenjianBruhatPreclosure2025} showed that inversion sets, and more generally initial sections of reflection orders, are fixed by Bruhat preclosure. He also proved that iterated Bruhat preclosure yields the inversion set of a bounded weak join in arbitrary Coxeter groups, and established idempotence on arbitrary reflection subsets in type $A$, giving another proof in that type. His examples show that idempotence fails for arbitrary reflection subsets in other types. Subsequently, Biagioli and Perrone~\cite{biagioliComputingJoinsWeak2026} developed an algorithm for joins of signed permutations, extending Markowsky's algorithm, and announced a proof of the conjecture in type $B$ to appear in a forthcoming article. More recently, Lim~\cite{limInversionSetsJoins2026} proved the conjecture for finite simply-laced Coxeter groups using palindromic Bruhat preclosure. Together with the previously known cases, this establishes the conjecture for all finite Coxeter groups.

\section{Preliminaries}\label{sec:prelim}
\subsection{Coxeter groups, roots, and orders}
Let $(W,S)$ be a Coxeter system, with identity $e$, length function $\ell$, and reflection set
$T=\{wsw^{-1}:w\in W,\ s\in S\}$.
We use the standard geometric representation on the real vector space $V$ with basis $\{\alpha_s:s\in S\}$, and write $\Phi=\pp\sqcup\Phi^-$ for its root system, where $\Phi^-=-\pp$.
For $\alpha\in\Phi$, let $t_\alpha$ be the corresponding reflection. Thus
$t_{-\alpha}=t_\alpha$ and $wt_\alpha w^{-1}=t_{w\alpha}$.
Our inversion-set conventions are
\[
\begin{aligned}
 I(w)&=\{\alpha\in\pp:w^{-1}\alpha\in\Phi^-\},\\
 I_R(w)&=\{\alpha\in\pp:w\alpha\in\Phi^-\}=I(w^{-1}),
 &D(w)&=\pp\setminus I(w).
\end{aligned}
\]
In particular, $|I(w)|=|I_R(w)|=\ell(w)$ and $I(w)=-wI_R(w)$.
For $\alpha\in\pp$, the root-sign criteria give
\begin{equation}\label{eq:root-sign}
\begin{aligned}
 \ell(wt_\alpha)>\ell(w)&\iff w\alpha\in\pp,\\
 \ell(t_\alpha w)>\ell(w)&\iff w^{-1}\alpha\in\pp.
\end{aligned}
\end{equation}
See \cite[Chapter~4]{bjornerCombinatoricsCoxeterGroups2005} for these facts and the geometric representation.

We write $\wedge_R$ for the meet in right weak order and $\covL$ for a cover in left weak order. A factorization $w=uv$ is \emph{reduced} if $\ell(w)=\ell(u)+\ell(v)$. For a reduced factorization $w=uv$,
\begin{equation}\label{eq:inversion-product}
 I(w)=I(u)\sqcup uI(v),\qquad
 I_R(w)=I_R(v)\sqcup v^{-1}I_R(u).
\end{equation}

Bruhat order $\le_B$ is the transitive closure of the relations $x<_B xt$ with $t\in T$ and $\ell(x)<\ell(xt)$. Such a relation is a cover, denoted by $x\covB xt$, precisely when $\ell(xt)=\ell(x)+1$.
We use the strong exchange property, the subword, chain, and lifting properties of Bruhat order, and the connectivity of reduced expressions under braid moves
\cite[Theorems~1.4.3, 2.2.2, 2.2.6, 3.3.1 and Proposition~2.2.7]{bjornerCombinatoricsCoxeterGroups2005}.
Write $D_L(z)=\{s\in S:\ell(sz)<\ell(z)\}$ and $D_R(z)=\{s\in S:\ell(zs)<\ell(z)\}$.

\subsection{Parabolic subgroups}
For $H\subseteq S$, let $W_H=\langle H\rangle$, and set $V_H=\operatorname{span}_{\R}\{\alpha_s:s\in H\}$, $\Phi_H=W_H\{\alpha_s:s\in H\}$, and $\Phi_H^+=\Phi_H\cap\pp$.
Then $\Phi_H=\Phi\cap V_H$. Length, weak order, and Bruhat order in $W_H$ are the restrictions of their counterparts in $W$, and $I(w)\subseteq\Phi_H^+$ for $w\in W_H$.
We will use the characterizations
\begin{equation}\label{eq:parabolic-minimal}
\begin{aligned}
 D_L(x)\cap H=\varnothing
 &\iff I(x)\cap\Phi_H^+=\varnothing\\
 &\iff \ell(vx)=\ell(v)+\ell(x)\quad\text{for every }v\in W_H
\end{aligned}
\end{equation}
of the minimal-length representative in $W_Hx$; see \cite[\S2.4 and Chapter~4]{bjornerCombinatoricsCoxeterGroups2005}.

Suppose $H=\{s,t\}$ with $s\ne t$ and $m_{st}<\infty$, and let $w_H$ be the longest element of $W_H$.
If $w_Hx$ is reduced, the two alternating reduced expressions for $w_H$ give two suffix chains from $x$ to $w_Hx$.
The roots added to $I_R$ along these chains are the roots in $x^{-1}\Phi_H^+$, in opposite orders.
The endpoint roots are $x^{-1}\alpha_s$ and $x^{-1}\alpha_t$; each other root is a linear combination of these two with both coefficients strictly positive.
This is the usual dihedral root sequence, transported by $x^{-1}$; compare \cite[\S3]{dyerWeakOrderCoxeter2019}.

\subsection{Root closures and reachability}
For $A\subseteq\pp$, let $\cone(A)$ be its nonnegative real span, using finite linear combinations, and put $d(A)=\cone(A)\cap\pp$.
We use the definitions of $\ccl$, coclosedness, biclosedness, and boundedness from Section~\ref{sec:intro}. All complements are taken in $\pp$, and all unqualified references to closedness mean $2$-closedness.

The sets $I(w)$ and $D(w)$ are closed under both $\ccl$ and $d$.
A finite subset of $\pp$ is biclosed if and only if it is an inversion set
\cite[Lemma~4.1(iv)]{dyerWeakOrderCoxeter2019}; see also \cite[Proposition~2.11]{hohlwegInversionSetsWeak2016} for the equivalent separation characterization in finite rank.
Consequently, if $f\in V^*$ is nonzero on every root and $B_f=\{\alpha\in\pp:f(\alpha)<0\}$ is finite, then $B_f=I(v)$ for some $v\in W$: both $B_f$ and its complement are conically closed, so $B_f$ is biclosed.

Recall that $\RR(C)$ consists of the endpoints of finite paths
\[
 e=x_0\longrightarrow x_1\longrightarrow\cdots\longrightarrow x_m,
 \qquad x_i=x_{i-1}t_{\alpha_i},\quad
 \alpha_i\in C,\quad \ell(x_i)>\ell(x_{i-1}).
\]
The path of length zero is allowed, so $e\in\RR(C)$.
The Bruhat preclosure $\overline C$ consists of roots whose corresponding reflections are endpoints of such paths. This agrees with the operation studied in~\cite{dermenjianBruhatPreclosure2025} under the correspondence between roots and reflections.

\begin{example}\label{ex:affine-a2-join}
Let $W$ be a Coxeter group of type $\widetilde A_2$, with simple reflections $s_0,s_1,s_2$,
simple roots $\alpha_0,\alpha_1,\alpha_2$, and
$\delta=\alpha_0+\alpha_1+\alpha_2$. Take
\[
 C=I(s_0s_1)\cup I(s_0s_2)
  =\{\alpha_0,\,\alpha_0+\alpha_1,\,\alpha_0+\alpha_2\}.
\]
The right weak join is $w=s_0s_1s_2s_1=s_0s_2s_1s_2$, and
\[
 \overline C=\ccl(C)=I(w)=C\sqcup\{\alpha_0+\delta\}.
\]
Figure~\ref{fig:affine-a2-join} shows the reachable graph, with arrows given
by right multiplication by $a=s_0$, $b=s_0s_1s_0$, or $c=s_0s_2s_0$.
We abbreviate $i_1\cdots i_k=s_{i_1}\cdots s_{i_k}$.
For instance, the path
\[
 e\xrightarrow{b}010\xrightarrow{c}0120\xrightarrow{b}01210
   =t_{\alpha_0+\delta}
\]
reaches the reflection corresponding to the new root in $\overline C$.
The twelve vertices form $\RR(C)=[e,w]_B w^{-1}$.
\end{example}

\begin{figure}[htbp]
\centering
\begin{tikzpicture}[x=1cm,y=0.95cm,>=Stealth,
  vertex/.style={circle,fill=black,inner sep=1.7pt},
  every edge/.style={draw,->,semithick},
  every label/.style={font=\small},
  lab/.style={font=\small,fill=white,inner sep=1.5pt}]

  \node[vertex,label=below:$e$] (e) at (0,0) {};
  \node[vertex,label=right:$0$] (0) at (0,1) {};
  \node[vertex,label=left:$10$] (10) at (-1,2) {};
  \node[vertex,label=right:$20$] (20) at (1,2) {};
  \node[vertex,label=left:$120$] (120) at (-1,3) {};
  \node[vertex,label=right:$210$] (210) at (1,3) {};
  \node[vertex,label=above:$1210$] (1210) at (0,4) {};
  \node[vertex,label=left:$010$] (010) at (-3,3) {};
  \node[vertex,label=right:$020$] (020) at (3,3) {};
  \node[vertex,label=left:$0120$] (0120) at (-3,4) {};
  \node[vertex,label=right:$0210$] (0210) at (3,4) {};
  \node[vertex,label=above:$01210$] (01210) at (0,5) {};

  \path
    (e) edge node[lab,left] {$a$} (0)
    (e) edge[bend left=17] node[lab,below left] {$b$} (010)
    (e) edge[bend right=17] node[lab,below right] {$c$} (020)
    (0) edge node[lab,below left] {$b$} (10)
    (0) edge node[lab,below right] {$c$} (20)
    (10) edge node[lab,left] {$c$} (120)
    (20) edge node[lab,right] {$b$} (210)
    (120) edge node[lab,above left] {$b$} (1210)
    (210) edge node[lab,above right] {$c$} (1210)
    (010) edge node[lab,left] {$c$} (0120)
    (020) edge node[lab,right] {$b$} (0210)
    (0120) edge node[lab,above left] {$b$} (01210)
    (0210) edge node[lab,above right] {$c$} (01210);

\end{tikzpicture}
\caption{The complete reachable graph for
$C=I(s_0s_1)\cup I(s_0s_2)$ in type $\widetilde A_2$.
Each arrow is labeled by the reflection acting on the right.}
\label{fig:affine-a2-join}
\end{figure}
\FloatBarrier

\section{Proof of the main theorem}\label{sec:proof}
\subsection{The inversion set generated by a bounded coclosed set}\label{sec:closure}
We first reduce to finite rank. Let $K\subseteq S$ be the finite support of a reduced expression for the element $x$ in Theorem~\ref{thm:main}. Then $C\subseteq I(x)\subseteq\Phi_K^+$.
Since $\Phi\cap V_K=\Phi_K$, the ambient and parabolic $2$-closures of $C$ agree. Moreover, $\Phi_K^+\setminus C$ is closed in $\Phi_K^+$, so $C$ remains coclosed in the parabolic root system. Every allowed reflection lies in $W_K$, and the length functions agree, so the reachable sets agree as well. Finally, the weak and Bruhat lower intervals of an element of $W_K$ lie in $W_K$. It is therefore enough to prove the theorem in finite rank. From now on, we assume $S$ is finite.

For $u\in W$, put $E_\pm(u)=\{\alpha\in\pp:\ell(t_\alpha u)=\ell(u)\pm1\}$.
Dyer's generation criteria \cite[Lemma~1.7 and Proposition~11.6]{dyerWeakOrderCoxeter2019} give, for $A\subseteq\pp$,
\begin{align}
 \ccl(A)=I(u)&\iff E_-(u)\subseteq A\subseteq I(u),\label{eq:minus}\\
 \ccl(A)=D(u)&\iff d(A)=D(u)
 \iff E_+(u)\subseteq A\subseteq D(u).\label{eq:plus}
\end{align}
If $\alpha\in E_-(u)$, Dyer's Theorem~1.8(ii) further gives
\begin{equation}\label{eq:dyer-cover}
\begin{aligned}
 v=u\wedge_R t_\alpha u
 &=\max\{z\le_R u:\alpha\notin I(z)\},\\
 D(v)&=\ccl\bigl(D(u)\cup\{\alpha\}\bigr).
\end{aligned}
\end{equation}

\begin{prop}\label{prop:closure}
Under the hypotheses of Theorem~\ref{thm:main}, there is a unique $w\in W$ such that $\ccl(C)=I(w)$.
Moreover, $w\le_R u$ whenever $C\subseteq I(u)$.
\end{prop}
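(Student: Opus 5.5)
The plan is to obtain $w$ by descending in right weak order from any upper bound of $C$, and to use Dyer's criterion \eqref{eq:minus} to detect when the descent must stop. Fix any $u\in W$ with $C\subseteq I(u)$; at least one exists by hypothesis. Consider
\[
\mathcal Z_u=\{z\le_R u: C\subseteq I(z)\}.
\]
This set is nonempty because it contains $u$. Choose $w\in\mathcal Z_u$ of minimal length. I then aim to show $E_-(w)\subseteq C$. Granting this, we have $E_-(w)\subseteq C\subseteq I(w)$, and \eqref{eq:minus} gives $\ccl(C)=I(w)$ directly. By construction $w\le_R u$.

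The key step is the inclusion $E_-(w)\subseteq C$, which I would prove by contradiction. Suppose $\alpha\in E_-(w)\setminus C$, and let $v=w\wedge_R t_\alpha w$ as in \eqref{eq:dyer-cover}. Then $v\le_R w\le_R u$, and $\alpha\notin I(v)$ while $\alpha\in I(w)$, so $v\ne w$ and $\ell(v)<\ell(w)$. Moreover, \eqref{eq:dyer-cover} gives $D(v)=\ccl(D(w)\cup\{\alpha\})$.

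Now I use that $C$ is coclosed. The set $\pp\setminus C$ is $2$-closed, and it contains $D(w)$ (because $C\subseteq I(w)$) as well as $\alpha$. Hence it contains $\ccl(D(w)\cup\{\alpha\})=D(v)$, which means $C\subseteq I(v)$. Thus $v\in\mathcal Z_u$ with $\ell(v)<\ell(w)$, contradicting the minimality of $w$. This is the only place the coclosed hypothesis enters, and it is precisely what lets us pass through Dyer's cover formula.

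Uniqueness is immediate, since an element of $W$ is determined by its inversion set (for instance because $I(w)\subseteq I(w')$ iff $w\le_R w'$). For the second assertion, take an arbitrary $u$ with $C\subseteq I(u)$ and run the construction above starting from that $u$. It produces some $w_u\le_R u$ with $I(w_u)=\ccl(C)=I(w)$, so $w_u=w$ and therefore $w\le_R u$.

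I do not expect a serious obstacle, since the heavy lifting is contained in Dyer's criteria \eqref{eq:minus} and \eqref{eq:dyer-cover}. The point needing care is checking that \eqref{eq:dyer-cover} applies exactly when $\alpha\in E_-(w)$. One must also observe that the minimum over $\mathcal Z_u$ exists, which holds because lengths are nonnegative integers and $\mathcal Z_u$ is nonempty.
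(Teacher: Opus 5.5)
Your proposal is correct and is essentially the paper's proof: take a minimal-length $w$ with $C\subseteq I(w)$, and if some $\alpha\in E_-(w)\setminus C$ existed, coclosedness of $C$ together with \eqref{eq:dyer-cover} would give $C\subseteq I(w\wedge_R t_\alpha w)$, a strictly shorter element, so \eqref{eq:minus} forces $\ccl(C)=I(w)$. The only cosmetic differences are that you minimize inside $[e,u]_R$ rather than over all of $W$, and you obtain $w\le_R u$ by rerunning the construction from $u$, whereas the paper notes that the $2$-closed set $I(u)$ contains $\ccl(C)=I(w)$.
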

\begin{proof}
Choose $w$ of minimum length subject to $C\subseteq I(w)$; such an element exists because $C\subseteq I(x)$.
If $\ccl(C)\ne I(w)$, then~\eqref{eq:minus} gives $\alpha\in E_-(w)\setminus C$.
The closed set $\pp\setminus C$ contains $D(w)\cup\{\alpha\}$, hence contains $D(v)$ for $v=w\wedge_R t_\alpha w$, by~\eqref{eq:dyer-cover}.
Thus $C\subseteq I(v)$. Since $v\le_R t_\alpha w$, we have $\ell(v)\le\ell(w)-1$, contradicting the choice of $w$.
Therefore $\ccl(C)=I(w)$. Uniqueness follows because inversion sets determine their elements.
If $C\subseteq I(u)$, closedness of $I(u)$ gives $I(w)=\ccl(C)\subseteq I(u)$, and hence $w\le_R u$.
\end{proof}

\subsection{Roots forced by least carriers}\label{sec:mandatory}
Fix the element $w$ from Proposition~\ref{prop:closure}.
For $u\in W$ and $\alpha,\beta\in\pp$, define
\[
 \A_u(\alpha)=\{h\le_R u:\alpha\notin I(h)\},\qquad
 \C_u(\beta)=\{d\le_L u:\beta\in I_R(d)\}.
\]
We call the elements of $\A_u(\alpha)$ \emph{anti-carriers} of $\alpha$ below $u$, and the elements of $\C_u(\beta)$ \emph{carriers} of $\beta$ below $u$.
These sets are, respectively, a lower ideal of $[e,u]_R$ and an upper ideal of $[e,u]_L$.

If $\beta\in I_R(u)$ and $\alpha=-u\beta$, then the anti-isomorphism from $[e, u]_R$ to $[e, u]_L$ given by $h\mapsto h^{-1}u=d$, sending a prefix of $u$ to the complementary suffix of $u$, identifies $\A_u(\alpha)$ and $\C_u(\beta)$: $\alpha\notin I(h)$ means $h^{-1}\alpha=-d\beta\in\pp$, which is equivalent to $\beta\in I_R(d)$.
This is the anti-isomorphism of \cite[Proposition~2.19]{readingSortableElementsInfinite2010} followed by the map $g\mapsto g^{-1}$.

\begin{example}[Carriers in type $A_2$]
Let $W=\langle s,t\mid s^2=t^2=(st)^3=e\rangle$ and $w_0=sts=tst$.
We have $I_R(st)=\{\alpha_t,\alpha_s+\alpha_t\}$ and $I_R(ts)=\{\alpha_s,\alpha_s+\alpha_t\}$.
Thus $\C_{w_0}(\alpha_s)=\{s,ts,w_0\}$ has least element $s$, whereas $\C_{w_0}(\alpha_s+\alpha_t)=\{st,ts,w_0\}$ has two incomparable minimal elements $st$ and $ts$, and hence no least element; see Figure~\ref{fig:carriers-a2}.
\end{example}

\begin{figure}[htbp]
\centering
\begin{tikzpicture}[x=1cm,y=0.95cm,>=Stealth,
  vertex/.style={circle,fill=black,inner sep=1.7pt},
  every edge/.style={draw,->,semithick},
  every label/.style={font=\small},
  lab/.style={font=\small,fill=white,inner sep=1.5pt}]

  \begin{scope}
    \node at (0,4.1) {$(a)\quad\beta=\alpha_s$};
    \node[vertex,label=below:$e$] (e) at (0,0) {};
    \node[vertex,label=left:$s$] (s) at (-1,1) {};
    \node[vertex,label=right:$t$] (t) at (1,1) {};
    \node[vertex,label=left:$ts$] (ts) at (-1,2) {};
    \node[vertex,label=right:$st$] (st) at (1,2) {};
    \node[vertex,label={above:{$sts=tst$}}] (w) at (0,3) {};

    \draw[semithick] (s) circle[radius=4pt];

    \path
      (e) edge node[lab,below left] {$s$} (s)
      (e) edge node[lab,below right] {$t$} (t)
      (s) edge node[lab,left] {$t$} (ts)
      (t) edge node[lab,right] {$s$} (st)
      (ts) edge node[lab,above left] {$s$} (w)
      (st) edge node[lab,above right] {$t$} (w);
  \end{scope}

  \begin{scope}[xshift=5.2cm]
    \node at (0,4.1) {$(b)\quad\beta=\alpha_s+\alpha_t$};
    \node[vertex,label=below:$e$] (e) at (0,0) {};
    \node[vertex,label=left:$s$] (s) at (-1,1) {};
    \node[vertex,label=right:$t$] (t) at (1,1) {};
    \node[vertex,label=left:$ts$] (ts) at (-1,2) {};
    \node[vertex,label=right:$st$] (st) at (1,2) {};
    \node[vertex,label={above:{$sts=tst$}}] (w) at (0,3) {};

    \draw[semithick] (ts) circle[radius=4pt];
    \draw[semithick] (st) circle[radius=4pt];

    \path
      (e) edge node[lab,below left] {$s$} (s)
      (e) edge node[lab,below right] {$t$} (t)
      (s) edge node[lab,left] {$t$} (ts)
      (t) edge node[lab,right] {$s$} (st)
      (ts) edge node[lab,above left] {$s$} (w)
      (st) edge node[lab,above right] {$t$} (w);
  \end{scope}

\end{tikzpicture}
\caption{Carriers in the left weak order of type $A_2$,
with $w_0=sts=tst$.
The carrier sets are $\{s,ts,w_0\}$ in (a) and
$\{st,ts,w_0\}$ in (b).
Circled vertices are minimal carriers.
Each arrow is labeled by the simple reflection acting on the left.}
\label{fig:carriers-a2}
\end{figure}

\begin{lemma}\label{lem:mandatory}
Let $\alpha\in I(w)$. If $\A_w(\alpha)$ has a greatest element $v$, then
\begin{equation}\label{eq:avoidance}
 \ccl\bigl(D(w)\cup\{\alpha\}\bigr)=D(v),\qquad \alpha\in C.
\end{equation}
Consequently, if $\beta\in I_R(w)$ and $\C_w(\beta)$ has a least element, then $-w\beta\in C$.
\end{lemma}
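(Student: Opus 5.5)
The plan is to prove the closure identity first, then use it together with the minimality of $w$ from Proposition~\ref{prop:closure} to force $\alpha\in C$, and finally transport the conclusion through the anti-isomorphism between anti-carriers and carriers.

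\textbf{Step 1: the closure identity.} Let $v$ be the greatest element of $\A_w(\alpha)$. Since $v\le_R w$, we have $D(w)\subseteq D(v)$, and $\alpha\in D(v)$ because $v\in\A_w(\alpha)$. As $D(v)$ is closed, this gives
\[
\ccl\bigl(D(w)\cup\{\alpha\}\bigr)\subseteq D(v).
\]
For the reverse inclusion, set $X=\ccl(D(w)\cup\{\alpha\})$. I would check that $X$ is biclosed:
\begin{itemize}
\item By Dyer's results on closures of $D(u)\cup\{\alpha\}$, this holds when $\alpha\in E_-(w)$ via \eqref{eq:dyer-cover}, but $\alpha$ need not be in $E_-(w)$.
\item Instead, I would use that $X\supseteq D(w)$ forces $\pp\setminus X\subseteq I(w)$, which is finite.
\item So it suffices to know $X$ is coclosed. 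I would argue that its complement equals $\bigcap\{I(h): h\in\A_w(\alpha)\}$.
\end{itemize}
In fact, it is simplest to argue directly. Let $Y=\pp\setminus X$; it is a finite subset of $I(w)$. Apply \eqref{eq:plus}: $X$ is closed and contains $D(w)$. Then I need an element $u$ with $\ccl(X)=D(u)$. I would pick $u\le_R w$ maximal with $D(u)\subseteq X$, i.e.\ with $I(u)\supseteq Y$ complemented appropriately. Equivalently, choose $u$ of \emph{maximal} length with $u\le_R w$ and $I(u)\subseteq Y$; such $u$ exist since $e$ qualifies. Then $\alpha\notin I(u)$, so $u\in\A_w(\alpha)$ and $u\le_R v$.

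If $D(u)\neq X$, I would use \eqref{eq:plus}, which says $d(X)=D(u)$ fails, to find $\gamma\in E_+(u)\setminus X$. Then $\gamma\in Y\subseteq I(w)$, so $t_\gamma u$ covers $u$, and the join-type argument (the dual of Proposition~\ref{prop:closure}) should give an element $u'$ with $u<_R u'\le_R w$ and $I(u')\subseteq Y$, contradicting maximality. This step is the main obstacle. I would first try to mimic the proof of Proposition~\ref{prop:closure} with roles of $I$ and $D$ swapped, using Dyer's Theorem~1.8 dual statement for $u\vee_R t_\gamma u$ inside $[e,w]_R$. Granting it, $X=D(u)\supseteq D(v)$, so $X=D(v)$ (and $u=v$).

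\textbf{Step 2: $\alpha\in C$.} Suppose $\alpha\notin C$. The complement $\pp\setminus C$ is closed and contains $D(w)$ (as $C\subseteq I(w)$) and $\alpha$. Hence it contains $X=D(v)$, so $C\subseteq I(v)$. By Proposition~\ref{prop:closure}, $w\le_R v$. But $v\le_R w$ and $\alpha\in I(w)\setminus I(v)$, which is a contradiction.

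\textbf{Step 3: the consequence.} For $\beta\in I_R(w)$, put $\alpha=-w\beta\in I(w)$. The anti-isomorphism $h\mapsto h^{-1}w$ identifies $\A_w(\alpha)$ with $\C_w(\beta)$ and reverses order. So a least carrier corresponds to a greatest anti-carrier, and Step 2 gives $-w\beta\in C$.
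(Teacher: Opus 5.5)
The proposal has a genuine gap: the reverse inclusion $D(v)\subseteq X$ is never proved. This inclusion is the actual content of the lemma, and it is exactly what your Step~2 consumes. Your Steps~2 and~3 agree with the paper, and the easy inclusion $\ccl(D(w)\cup\{\alpha\})\subseteq D(v)$ is fine.

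\textbf{Why the flagged step is only a restatement.} Your auxiliary element is forced. Since $X\subseteq D(v)$, you have $I(v)\subseteq Y$. Any $u\le_R w$ with $I(u)\subseteq Y$ omits $\alpha\in X$, so $u\in\A_w(\alpha)$ and $u\le_R v$. Hence the maximal-length choice is $u=v$. The step you call the main obstacle (from $\gamma\in E_+(u)\setminus X$, produce $u'$ with $u<_R u'\le_R w$ and $I(u')\subseteq Y$) therefore asks for an element of $\A_w(\alpha)$ strictly above $v$. No such element exists, so saying that $D(v)\ne X$ yields one is just a restatement of the lemma, not a reduction of it.

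\textbf{Why it fails as formulated.} As you state it, this step never invokes the greatest-element hypothesis, and without that hypothesis it is false. Take type $A_2$ with $w=w_0$ (the element attached to $C=\pp$) and $\alpha=\alpha_s+\alpha_t$. Then $X=\{\alpha_s+\alpha_t\}$ and $Y=\{\alpha_s,\alpha_t\}$, a maximal choice is $u=s$, and $\gamma=\alpha_t\in E_+(s)\setminus X$. Yet neither $st$ nor $w_0$ has its inversion set inside $Y$. Here $\A_{w_0}(\alpha)=\{e,s,t\}$ has no greatest element, and $X$ is not biclosed. A join-type argument cannot rescue this either: $s\vee_R t_\gamma s=s\vee_R ts=w_0$ leaves $Y$. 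Nothing confines such joins to $Y$ unless $Y$ is already known to be closed, which is what is being proved.

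\textbf{The missing idea.} You need a way to turn the hypothesis into information about individual roots. For each root outside $X$ you need some $z\in\A_w(\alpha)$ with that root in $I(z)$; then $z\le_R v$ places the root in $I(v)$. The paper obtains such $z$ by convex separation:
\begin{enumerate}
\item By Dyer's Lemma~11.5, $\cone(D(w))$ is polyhedral, so $K=\cone(D(w)\cup\{\alpha\})$ is closed.
\item For $\gamma\in\pp\setminus K$, take a functional that is nonnegative on $K$ and negative on $\gamma$. Add a small multiple of a functional positive on all simple roots, choosing the multiple so the result is nonzero on every root.
\item The result is positive on $D(w)\cup\{\alpha\}$ and negative on $\gamma$. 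The positive roots where it is negative form a finite biclosed subset of $I(w)$, hence equal $I(z)$ with $z\in\A_w(\alpha)$ and $\gamma\in I(z)\subseteq I(v)$.
\end{enumerate}
This gives $d(D(w)\cup\{\alpha\})=D(v)$. Separation only controls the conic closure $d$, not $\ccl$ directly, so Dyer's criterion~\eqref{eq:plus} is then needed to upgrade this to $\ccl(D(w)\cup\{\alpha\})=D(v)$. With that identity in hand, your Steps~2 and~3 go through as written.

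\textbf{Two smaller slips.} First, $D(u)\subseteq X$ is equivalent to $I(u)\supseteq Y$, not to $I(u)\subseteq Y$. Second, $\bigcap\{I(h):h\in\A_w(\alpha)\}$ is empty because $e\in\A_w(\alpha)$; it is the union that should equal $\pp\setminus X$.
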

\begin{proof}
Set $\Gamma=D(w)\cup\{\alpha\}$ and $K=\cone(\Gamma)$.
Since $v\le_R w$ and $\alpha\notin I(v)$, we have $\Gamma\subseteq D(v)$.
By \cite[Lemma~11.5]{dyerWeakOrderCoxeter2019}, $\cone(D(w))$ is polyhedral. Thus $K$, obtained by adjoining one generator, is a finitely generated closed cone.

Take $\gamma\in\pp\setminus K$. Separation of a point from a closed convex cone \cite[\S11]{rockafellarConvexAnalysis2015} gives $f\in V^*$ such that $f(\delta)\ge0$ for all $\delta\in K$ and $f(\gamma)<0$.
Let $h\in V^*$ satisfy $h(\alpha_s)=1$ for every $s\in S$, so $h$ is strictly positive on $\pp$.
For sufficiently small $\eps>0$, the functional $f'=f+\eps h$ remains negative on $\gamma$ and is strictly positive on $\Gamma$, in particular on $D(w)$.
Only the finitely many positive roots in $I(w)$ remain to be checked for zeros. By avoiding the finitely many corresponding values of $\eps$, we may assume that $f'$ is nonzero on every root.
The set $B=\{\delta\in\pp:f'(\delta)<0\}$ is biclosed and contained in the finite set $I(w)$. Hence $B=I(z)$ for some $z\in W$.
We have $z\le_R w$ and $\alpha\notin I(z)$, so $z\le_R v$ by the defining property of $v$.
In particular, $\gamma\in I(z)\subseteq I(v)$.

It follows that $\pp\setminus K\subseteq I(v)$, or equivalently $D(v)\subseteq K\cap\pp=d(\Gamma)$.
The reverse inclusion follows from $\Gamma\subseteq D(v)$ and conic closedness of $D(v)$.
Thus $d(\Gamma)=D(v)$, and~\eqref{eq:plus} gives $\ccl(\Gamma)=D(v)$.

If $\alpha\notin C$, the closed set $\pp\setminus C$ contains $\Gamma$, hence $D(v)$.
Then $C\subseteq I(v)$, and consequently $I(w)=\ccl(C)\subseteq I(v)$, contradicting $\alpha\in I(w)\setminus I(v)$.
The assertion about carriers follows from the order-reversing bijection above.
\end{proof}
\begin{remark}
In the case $C=I(u)\cup I(v)$, the assertion $\alpha\in C$ in Lemma~\ref{lem:mandatory} has a simpler proof. If $\alpha\notin C$, then $u,v\in\A_w(\alpha)$. Writing $z=\max\A_w(\alpha)$, we have $u,v\le_Rz$, and then $w=u\vee_Rv\le_Rz$, contradicting $\alpha\in I(w)\setminus I(z)$.
\end{remark}

Three results about carriers will be used later.
First, if $\beta\in\pp$ and $zt_\beta\covB z$, then applying~\eqref{eq:dyer-cover} to $\alpha=-z\beta$ gives
\begin{equation}\label{eq:carrier-cover}
 \min\C_z(\beta)=(z\wedge_R zt_\beta)^{-1}z.
\end{equation}
Second, least carriers restrict to smaller suffix intervals:
\begin{equation}\label{eq:carrier-inherit}
 p=\min\C_z(\beta),\quad b\in\C_z(\beta)
 \quad\Longrightarrow\quad p=\min\C_b(\beta).
\end{equation}
Indeed, $p\le_L b$, and $\C_b(\beta)\subseteq\C_z(\beta)$. Finally,
\begin{equation}\label{eq:minicover}
p=\min\C_z(\beta)
\quad\Longrightarrow\quad
pt_\beta\lessdot_L p.
\end{equation}
Indeed, since $\beta\in I_R(p)$, we have $p\ne e$. Write $p=sq$ as a reduced factorization with $s\in S$. Since $q<_Lp\le_Lz$, minimality gives $\beta\notin I_R(q)$. Thus $\beta\in I_R(p)\setminus I_R(q)=\{q^{-1}\alpha_s\}$, and hence $pt_\beta=q\lessdot_Lp$. Note that $pt_\beta$ is in fact the only element that $p$ covers in left weak order.

\subsection{Reduction to a root-selection problem}\label{sec:reduct}
We now turn to the main part of the proof. In Section~\ref{sec:closure}, we showed that $\ccl(C)=I(w)$, so $\RR(C)\subseteq\RR(I(w))$. To prove the main theorem, it remains to show that $\RR(I(w))\subseteq[e,w]_Bw^{-1}\subseteq\RR(C)$. We begin with the first inclusion.
\begin{lemma}\label{lem:IWsubsetew}
Let $(W,S)$ be any Coxeter system and $w\in W$. Then $\RR(I(w))\subseteq[e,w]_Bw^{-1}$.
\end{lemma}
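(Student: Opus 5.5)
Set $X=[e,w]_Bw^{-1}=\{yw^{-1}:y\le_B w\}$. We induct along a path: $e\in X$ because $e\le_B w$. So it suffices to show that $X$ is closed under the allowed steps. That is, if $u=yw^{-1}\in X$, $\alpha\in I(w)$ and $\ell(ut_\alpha)>\ell(u)$, then $ut_\alpha\in X$.

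The first step is a change of variables. Write $\alpha=-w\beta$ with $\beta\in I_R(w)$. Then $t_\alpha=wt_\beta w^{-1}$, so $ut_\alpha=yw^{-1}\cdot wt_\beta w^{-1}=(yt_\beta)w^{-1}$. Membership $ut_\alpha\in X$ is therefore equivalent to $yt_\beta\le_B w$.

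The second step translates the length condition. By \eqref{eq:root-sign}, $\ell(ut_\alpha)>\ell(u)$ means $u\alpha\in\pp$. Here $u\alpha=yw^{-1}(-w\beta)=-y\beta$, so the condition says $y\beta\in\Phi^-$, i.e.\ $\ell(yt_\beta)<\ell(y)$. Hence $yt_\beta<_B y\le_B w$, and transitivity of Bruhat order gives $yt_\beta\le_B w$. So every allowed step preserves $X$, and induction on the path length yields $\RR(I(w))\subseteq X$.

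The only point needing care is the sign bookkeeping. An \emph{increasing} step on $u$ corresponds to a \emph{decreasing} Bruhat step on $y=uw$. Nothing else in the argument is delicate. The argument uses neither finite rank nor any hypothesis beyond $\alpha\in I(w)$.
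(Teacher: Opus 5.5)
Your proof is correct and follows essentially the same route as the paper's. Both arguments set $\beta=-w^{-1}\alpha$, rewrite $ut_\alpha=(yt_\beta)w^{-1}$, and use the root-sign criterion to turn an increasing step from $u$ into a Bruhat-decreasing step $yt_\beta<_B y\le_B w$; the paper inducts on $\ell(u)$ rather than on path length, which makes no real difference.
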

\begin{proof}
We proceed by induction on the length of $u\in\RR(I(w))$. If $u=e$, then $e=w\cdot w^{-1}\in[e,w]_Bw^{-1}$. Suppose that the result is true for all $u\in \RR(I(w))$ with $\ell(u)\leq k$. Given $u\in \RR(I(w))$ with $\ell(u)=k+1$, write $u=vt_{\alpha}$, where $\ell(v)\le k$, $v\in\RR(I(w))$ and $\alpha\in I(w)$. By the induction hypothesis, there exists \(x\leq_B w\) such that $v=xw^{-1}$. Hence $u=vt_\alpha=xw^{-1}t_\alpha=xt_{w^{-1}\alpha}w^{-1}$. Since $\ell(vt_\alpha)>\ell(v)$ and $\alpha\in\Phi^+$, $v\alpha\in\Phi^+$. On the other hand, $\alpha\in I(w)$ implies $w^{-1}\alpha\in\Phi^-$, so we set $\beta=-w^{-1}\alpha\in\Phi^+$. Then $t_\beta=t_{w^{-1}\alpha}$ and $x\beta=-xw^{-1}\alpha=-v\alpha\in\Phi^-$. Combining the results above, we obtain $xt_\beta<_B x\leq_B w$. Therefore, $u=xt_\beta w^{-1}\in[e,w]_B w^{-1}$.
\end{proof}

The second inclusion is more complicated, we use the following reduction.
\begin{lemma}\label{lem:easyreduct}
Suppose that for every $y<_B w$, there is $\beta\in I_R(w)$ such that $y<_B yt_\beta\leq_Bw$ and $-w\beta\in C$. Then $[e,w]_Bw^{-1}\subseteq \RR(C)$.
\end{lemma}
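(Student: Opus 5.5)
We argue by downward induction on $\ell(y)$ for $y\le_B w$, proving that $yw^{-1}\in\RR(C)$. The base case is $y=w$: then $yw^{-1}=e$, which lies in $\RR(C)$ via the path of length zero. Since $W$ has finite rank and $[e,w]_B$ is finite, the induction over $y\in[e,w]_B$ ordered by decreasing length is well founded.

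For the inductive step, take $y<_B w$. The hypothesis supplies $\beta\in I_R(w)$ with $y<_B yt_\beta\le_B w$ and $\alpha:=-w\beta\in C$. Set $y'=yt_\beta$. Because $\ell(y')>\ell(y)$, the induction hypothesis gives $u':=y'w^{-1}\in\RR(C)$. Now put
\[
 u:=yw^{-1}=y't_\beta w^{-1}=y'w^{-1}\,(wt_\beta w^{-1})=u't_{w\beta}=u't_\alpha .
\]
It remains to check that the step $u'\to u't_\alpha$ is length-increasing. By \eqref{eq:root-sign}, this amounts to showing $u'\alpha\in\pp$. We compute
\[
 u'\alpha=y'w^{-1}(-w\beta)=-y'\beta=-yt_\beta\beta=y\beta .
\]
Since $y<_B yt_\beta$ with $\beta\in\pp$, \eqref{eq:root-sign} gives $y\beta\in\pp$. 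Thus $\ell(u)>\ell(u')$, and because $\alpha\in C$, appending this step to a path reaching $u'$ shows $u\in\RR(C)$.

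This completes the induction, so $[e,w]_Bw^{-1}\subseteq\RR(C)$. The argument reverses the computation in Lemma~\ref{lem:IWsubsetew}. The only delicate points are keeping track of the sign conventions, namely that $\alpha=-w\beta$ and $t_\alpha=t_{w\beta}$, and making sure the induction runs downward in length from $w$. Neither presents a real obstacle; the substantive work lies in verifying the hypothesis of this lemma, which is the root-selection problem.
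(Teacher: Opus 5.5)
Your proof is correct and follows essentially the same route as the paper. Both use descending induction on $\ell(y)$, write $yw^{-1}=(yt_\beta w^{-1})\,t_{w\beta}$, and check the step is length-increasing via the root-sign criterion, since $(yt_\beta w^{-1})(-w\beta)=y\beta\in\pp$.
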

\begin{proof}
We proceed by descending induction on the length of $y\in[e,w]_B$. If $y=w$, then $ww^{-1}=e\in\RR(C)$. Now suppose $y<_Bw$. By assumption, there is $\beta\in I_R(w)$ such that $y<_B yt_\beta\leq_Bw$ and $-w\beta\in C$. Then $\beta\notin I_R(y)$, so $y\beta\in \Phi^+$. By the induction hypothesis, $yt_\beta w^{-1}\in \RR(C)$. Moreover, $yw^{-1}=yt_\beta w^{-1}t_{w\beta}$. Since $\beta\in I_R(w)$, we have $w\beta\in \Phi^-$. Together with $yt_\beta w^{-1}(w\beta)=-y\beta\in\Phi^-$, this implies $yw^{-1}>_Byt_\beta w^{-1}$. Since $-w\beta\in C$ and $yt_{\beta}w^{-1}\in\RR(C)$, we conclude that $yw^{-1}\in\RR(C)$.
\end{proof}

Combining this with Lemma~\ref{lem:mandatory} gives the following corollary.
\begin{cor}\label{cor:rootreduct}
Suppose that for every $y<_B w$, there is $\beta\in I_R(w)$ such that $y<_B yt_\beta\leq_Bw$ and $\C_w(\beta)$ has a least element. Then $[e,w]_Bw^{-1}\subseteq \RR(C)$.
\end{cor}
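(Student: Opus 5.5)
This follows by chaining Lemma~\ref{lem:easyreduct} with the second assertion of Lemma~\ref{lem:mandatory}. The hypotheses already fix the situation: $w$ is the element from Proposition~\ref{prop:closure}, so $\ccl(C)=I(w)$, $C$ is $2$-coclosed, and we are in finite rank.

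The plan is to verify the hypothesis of Lemma~\ref{lem:easyreduct} for every $y<_B w$ and then apply that lemma. Fix $y<_B w$. The hypothesis of the corollary gives $\beta\in I_R(w)$ with $y<_B yt_\beta\le_B w$ and such that $\C_w(\beta)$ has a least element. The only property still needed for Lemma~\ref{lem:easyreduct} is $-w\beta\in C$.

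To get it, set $\alpha=-w\beta$. Since $\beta\in I_R(w)$, we have $w\beta\in\Phi^-$, so $\alpha\in\pp$. Moreover $w^{-1}\alpha=-\beta\in\Phi^-$, so $\alpha\in I(w)$. The map $h\mapsto h^{-1}w$ is an order-reversing bijection from $[e,w]_R$ to $[e,w]_L$, and it carries $\A_w(\alpha)$ onto $\C_w(\beta)$. Hence the least element of $\C_w(\beta)$ corresponds to a greatest element of $\A_w(\alpha)$. Lemma~\ref{lem:mandatory} then applies to $\alpha$ and gives $\alpha\in C$. This is exactly the "consequently" clause of that lemma, which we may quote directly.

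Thus every $y<_B w$ admits a $\beta$ with $y<_B yt_\beta\le_B w$ and $-w\beta\in C$. Lemma~\ref{lem:easyreduct} then yields $[e,w]_Bw^{-1}\subseteq\RR(C)$. There is no real obstacle here. The only point needing care is the sign bookkeeping that places $\alpha=-w\beta$ in $I(w)$, since the mandatory-root lemma is stated for roots of $I(w)$.
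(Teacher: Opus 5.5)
Your proposal is correct and follows the paper's argument. The paper obtains this corollary by combining Lemma~\ref{lem:easyreduct} with the carrier clause of Lemma~\ref{lem:mandatory}, which supplies $-w\beta\in C$ through the order-reversing bijection that identifies $\A_w(-w\beta)$ with $\C_w(\beta)$; your sign check placing $-w\beta$ in $I(w)$ is the bookkeeping the paper leaves implicit.
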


\subsection{Selection of the root}\label{sec:selection}
We seek a root $\beta$ satisfying three conditions: $\beta\in I_R(w)\backslash I_R(y)$, $yt_\beta\leq_Bw$, and $\C_w(\beta)$ has a least element. The first condition alone is easy to satisfy, since $I_R(w)$ is not a subset of $I_R(y)$. For a simple root $\beta$ satisfying the first condition, the lifting property \cite[Proposition~2.2.7]{bjornerCombinatoricsCoxeterGroups2005} gives the second. Although $\beta$ need not be simple, we will reduce to a situation where the lifting property applies. The main difficulty is therefore the third condition.

Write $I_R(w,y)=I_R(w)\backslash I_R(y)$. For every $\beta\in I_R(w,y)$ and every reduced expression $w=s_1\cdots s_k$, there is exactly one $j\in\{1,\ldots,k\}$ such that $w_j=w_{j+1}t_{\beta}$, where $w_j=s_j\cdots s_k$ and $w_{k+1}=e$. Applying~\eqref{eq:carrier-cover}, we find that $\C_{w_j}(\beta)$ has a least element. This alone does not imply that $\C_w(\beta)$ has a least element. Our strategy is to find $\beta\in I_R(w,y)$ such that the least element of $\C_{w_j}(\beta)$ is the same for all reduced expressions of $w$. We therefore examine what happens under a single braid move.

\begin{lemma}\label{lem:braid}
Let $H=\{s,t\}\subseteq S$, where $s\ne t$ and $m_{st}<\infty$, and suppose that $w_Hx$ is reduced.
Consider an internal edge $u\covL v=su$ on one of the two suffix chains from $x$ to $w_Hx$, where internal means $u>_L x$ and $v<_L w_Hx$. Let $\beta=u^{-1}\alpha_s$ and $\eta_r=x^{-1}\alpha_r$ for $r\in H$. Then the least carriers $p=\min\C_v(\beta)$ and $q_r=\min\C_{rx}(\eta_r)$ exist, and $\ell(q_r)<\ell(p)$ for both $r\in H$.
\end{lemma}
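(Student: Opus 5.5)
The plan is to obtain existence of all three least carriers from \eqref{eq:carrier-cover}, and then to compare lengths by embedding everything in the single interval $[e,w_Hx]_L$, where \eqref{eq:carrier-inherit} applies. Write $u=hx$ and $v=shx$ with $h\in W_H$. Both factorizations are reduced by \eqref{eq:parabolic-minimal}, and $1\le\ell(h)<\ell(sh)\le m_{st}-1$.

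\textbf{Existence.} First, $vt_\beta=v\,u^{-1}su=u$ and $\ell(u)=\ell(v)-1$, so $vt_\beta\covB v$, and \eqref{eq:carrier-cover} gives $p=(v\wedge_R u)^{-1}v$. Likewise $(rx)t_{\eta_r}=x\covB rx$, so $q_r=\min\C_{rx}(\eta_r)$ exists. Next, the same reasoning applies to $z=w_Hx$: we have $zt_{\eta_r}=w_Hrx$ of length $\ell(z)-1$, so $\min\C_{w_Hx}(\eta_r)$ exists. Since $rx\in\C_{w_Hx}(\eta_r)$, \eqref{eq:carrier-inherit} shows that $\min\C_{w_Hx}(\eta_r)=q_r$. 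Thus $q_r\le_L d$ for every carrier $d$ of $\eta_r$ below $w_Hx$.

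\textbf{The carried roots.} The root $\beta=x^{-1}h^{-1}\alpha_s$ is an interior root of the transported dihedral sequence $x^{-1}\Phi_H^+$, because $h\neq e$ and $sh\ne w_H$. Hence $\beta$ is a strictly positive combination of $\eta_s$ and $\eta_t$. Since $D(p^{-1})=\pp\setminus I_R(p)$ is closed and $\beta\in I_R(p)$, at least one $\eta_r$ lies in $I_R(p)$. For such an $r$ we have $p\in\C_{w_Hx}(\eta_r)$, so $q_r\le_L p$. Equality is impossible. By \eqref{eq:minicover}, $p$ covers only $pt_\beta$ in left weak order, and $q_r$ covers only $q_rt_{\eta_r}$. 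If $p=q_r$, then $I_R(p)\setminus I_R(pt_\beta)$ would be $\{\beta\}=\{\eta_r\}$, which is false. Hence $\ell(q_r)<\ell(p)$ for every $r$ with $\eta_r\in I_R(p)$.

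\textbf{The remaining root (main obstacle).} The type $A_2$ case $x=e$, $u=s$, $v=ts$ shows that $p$ need not carry both $\eta_s$ and $\eta_t$, so the other index $r'$ needs a separate length estimate. The first thing I would try is the parabolic factorization $p=ky$, with $k\in W_H$ and $y$ minimal in $W_Hp$. I would show the following.
\begin{itemize}
\item[(a)] $y\le_L x$. This uses that $p$ is a suffix of $v=shx$ and that taking minimal $W_H$-coset representatives is compatible with left weak order.
\item[(b)] In fact $y=x$. The roots of $I_R(p)$ in $x^{-1}\Phi_H^+$ should come exactly from $y^{-1}I_R(k)$, and $\beta\in I_R(p)$ forces this transported dihedral family to be the one based at $x$.
\item[(c)] $\ell(k)\ge2$. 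This holds because $I_R(p)$ then contains $\beta$ together with the endpoint $\eta_r$ of the same initial segment of the dihedral sequence.
\end{itemize}
Granting these, $\ell(p)=\ell(k)+\ell(x)\ge\ell(x)+2>\ell(r'x)\ge\ell(q_{r'})$, using $q_{r'}\le_L r'x$. This would finish the proof; it would even give the bound for the carried root again. I expect step (b), identifying $y$ with $x$ using only $\beta\in I_R(p)$ and minimality of $p$, to be the delicate point. If it fails directly, the fallback is to use the explicit formula $p=(v\wedge_R u)^{-1}v$. One would show that $v\wedge_R u$ is a prefix of $v$ of length at most $\ell(sh)-2+\ell(x\wedge_R r'x)$ by comparing the prefixes avoiding $\alpha_s$ in $shx$ and avoiding $\alpha_{r'}$ in $r'x$.
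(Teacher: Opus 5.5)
Your existence argument and your treatment of an index $r$ with $\eta_r\in I_R(p)$ are correct; in particular, the unique-lower-cover argument from \eqref{eq:minicover} does rule out $p=q_r$. The gap is in the remaining root. Step (b) is false, and the chain $\ell(p)=\ell(k)+\ell(x)\ge\ell(x)+2>\ell(r'x)\ge\ell(q_{r'})$ cannot be rescued, because the bound $\ell(q_{r'})\le\ell(r'x)$ discards exactly the information that is needed.

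For a counterexample, take $W$ generated by $s,t,r$ with $m_{st}=3$ and $m_{sr}=m_{tr}=2$, with $H=\{s,t\}$, $x=r$, and the internal edge $u=tr\covL v=str$. Then $\beta=rt\alpha_s=\alpha_s+\alpha_t$, $[e,v]_L=\{e,r,t,tr,st,v\}$ and $\C_v(\beta)=\{st,v\}$. So $p=st$, with $k=st$ and $y=e\ne x$. Since $I_R(p)=\{\alpha_t,\alpha_s+\alpha_t\}$, the uncarried index is $r'=s$, and $\ell(r'x)=\ell(sr)=2=\ell(p)$. Adding further generators commuting with $s,t$ to $x$ even makes $\ell(p)<\ell(r'x)$. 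The lemma survives here only because $q_s=s$ is strictly shorter than $sr$. The prefix $r=u\wedge_R v$ that is stripped off $v$ to form $p$ is also stripped off $sx$ to form $q_s$, since $x\wedge_R sx=r$.

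Note also that $x^{-1}\Phi_H^+=y^{-1}\Phi_H^+$ in this example, so the heuristic behind (b), that the transported dihedral family pins down $y$, is precisely what breaks. Likewise, the general principle you invoke for (a) is false: in type $A_2$ with $H=\{s\}$ we have $st\le_L sts$, but the minimal coset representatives $t$ and $ts$ are incomparable in $\le_L$.

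Your fallback is a correct reformulation but not yet an argument. Since $\ell(p)=\ell(v)-\ell(a)$ with $a=u\wedge_R v$, and $\ell(q_{r'})=\ell(x)+1-\ell(x\wedge_R r'x)$ by \eqref{eq:carrier-cover}, your bound $\ell(a)\le\ell(sh)-2+\ell(x\wedge_R r'x)$ is literally equivalent to $\ell(q_{r'})<\ell(p)$.

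The missing idea is the stronger comparison $a\le_R x\wedge_R rx$ for both $r\in H$; the paper proves it as follows.
\begin{enumerate}
\item $D_L(u)\cap H$ and $D_L(v)\cap H$ are distinct singletons, so the common prefix $a$ has no left descent in $H$. Hence $I(a)\cap\Phi_H^+=\varnothing$ by \eqref{eq:parabolic-minimal}.
\item Suppose $a\le_R z$ and $t_\gamma z\covB z$ with $\gamma\in\Phi_H^+$. The strong exchange property deletes one letter from a reduced word of $z$ beginning with one for $a$. That letter cannot lie in the $a$-part, since $\gamma\notin I(a)$, so $a\le_R t_\gamma z$.
\item Apply this along saturated Bruhat chains in $W_H$ from $v_H$ down to $e$, $s$ and $t$, with every vertex right-multiplied by $x$. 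This gives $a\le_R x,sx,tx$.
\end{enumerate}
Consequently $\ell(q_r)\le\ell(x)+1-\ell(a)<\ell(v)-\ell(a)=\ell(p)$ for both $r$ at once. This uniform bound also makes your separate carried-root argument unnecessary.
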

\begin{proof}
Since $vt_\beta=u\covB v$ and $rxt_{\eta_r}=x\covB rx$, formula~\eqref{eq:carrier-cover} gives $a=u\wedge_R v$, $b_r=x\wedge_R rx$, $p=a^{-1}v$, and $q_r=b_r^{-1}rx$.
The nonidentity, nonlongest dihedral factors of $u$ and $v$ have distinct singleton left descent sets in $H$. Since $w_Hx$ is reduced, the same holds for $D_L(u)\cap H$ and $D_L(v)\cap H$.
A common prefix $a$ of $u$ and $v$ has no left descent in $H$. By~\eqref{eq:parabolic-minimal}, $I(a)\cap\Phi_H^+=\varnothing$.

We use the following consequence of the strong exchange property \cite[Theorem~1.4.3]{bjornerCombinatoricsCoxeterGroups2005}: if $a\le_R z$, $\gamma\in D(a)$, and $t_\gamma z\covB z$, then $a\le_R t_\gamma z$.
Indeed, in a reduced expression $z=ab$, the strong exchange property deletes one letter. Deletion in the prefix $a$ would imply $\gamma\in I(a)$, so the deleted letter lies in $b$. The remaining expression is reduced because the length drops by exactly one.

Write $v=v_Hx$ with $v_H\in W_H$. Its length in $W_H$ is at least two, so $e,s,t\le_B v_H$.
Choose saturated descending Bruhat chains in $W_H$ from $v_H$ to $e$, to $s$, and to $t$, and append $x$ to every vertex.
The resulting steps remain covers and their left reflection labels are roots in $\Phi_H^+$.
The preceding exchange argument therefore applies at every step and yields $a\le_R x, sx, tx$.
Hence $a\le_R b_r$, and
\[
 \ell(q_r)=\ell(rx)-\ell(b_r)
 \le\ell(x)+1-\ell(a)
 <\ell(v)-\ell(a)=\ell(p).\qedhere
\]
\end{proof}

\begin{prop}\label{prop:rootselect}
For every $y,w\in W$ with $y<_B w$, there is $\beta\in I_R(w)$ such that $y\covB yt_\beta\le_B w$ and $\C_w(\beta)$ has a least element.
\end{prop}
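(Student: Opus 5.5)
Among all reduced expressions of $w$ and all roots $\beta\in I_R(w,y)$ compatible with a lifting step, we will pick one whose local least carrier is as long as possible, and then use Lemma~\ref{lem:braid} to show that this carrier cannot change under braid moves. The argument is an induction on $\ell(w)$, driven by a choice of left descent and the lifting property.

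First we handle the case where there is $s\in D_L(w)$ with $s\notin D_L(y)$. By the lifting property, $sy\le_B w$. Then $\beta=y^{-1}\alpha_s$ satisfies $yt_\beta=sy$ and $y\covB sy\le_B w$. Since $\ell(sy)=\ell(y)+1$ and $sy\le_B w$, the relation $y\covB yt_\beta$ holds. We still need $\beta\in I_R(w)$. This is not automatic, so the plan is instead to fix a simple $s$ with $sw<_L w$, set $w'=sw$, and split into cases.

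\begin{itemize}
\item If $y\le_B w'$ and $y\ne w'$, apply induction to $(y,w')$. This produces $\beta\in I_R(w')\subseteq I_R(w)$ with $y\covB yt_\beta\le_B w'\le_B w$ and $\C_{w'}(\beta)$ having a least element $p$. We must then upgrade this to $\C_w(\beta)$. The carriers of $\beta$ below $w$ either lie below $w'$ in left order or have the form $d$ with $d\le_L w$ but $d\not\le_L w'$. We would show that $p$ remains least, using \eqref{eq:carrier-inherit} together with the claim that every $d\in\C_w(\beta)$ dominates some carrier below $w'$.
\item If $y=w'$, take $\beta=w^{-1}\alpha_s$ directly. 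By \eqref{eq:carrier-cover}, $\C_w(\beta)$ has the least element $(w\wedge_R sw)^{-1}w$.
\item If $y\not\le_B w'$, the lifting property gives $sy<_B y$ and $sy\le_B w'$. We then build $\beta$ from a root for $(sy,w')$ conjugated by $s$.
\end{itemize}

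The main obstacle is the upgrade from $\C_{w'}(\beta)$ to $\C_w(\beta)$. A least carrier relative to one reduced expression need not be least globally, as the $A_2$ example shows. To control this, we fix the root $\beta$ by an extremal choice instead of by the induction alone.

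Among all $\beta\in I_R(w,y)$ with $y\covB yt_\beta\le_B w$, and all reduced words of $w$, consider the position $j$ where $\beta$ is created and the least carrier $p_j=\min\C_{w_j}(\beta)$ given by \eqref{eq:carrier-cover}. Choose the pair minimizing $\ell(p_j)$. We then show that $p_j$ is invariant under every braid move of the reduced word. A braid move not touching position $j$ leaves $w_j$ unchanged, or replaces it by an element with the same relevant interval. A braid move through position $j$ puts us in the situation of Lemma~\ref{lem:braid}: $\beta$ is then an internal root $u^{-1}\alpha_s$ of a dihedral segment. In that situation the lemma supplies the endpoint roots $\eta_r$ with strictly shorter least carriers.

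For this to give a contradiction, one endpoint root, transported to a root of $w$, must still satisfy $y\covB yt\le_B w$. We would argue this by a dihedral lifting argument inside the coset $W_Hx$: if $y$ fails the Bruhat condition for both $\eta_s$ and $\eta_t$, then it fails it for the internal root too. With this, the minimal choice is stable under all braid moves. Since reduced words are connected by braid moves, $p_j$ is the same for every reduced expression, and hence equals $\min\C_w(\beta)$. Establishing that dihedral lifting claim is where we expect most of the work.
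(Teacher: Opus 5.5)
\textbf{There is a genuine gap.} Your overall strategy is the right one: choose an extremal root by least local carriers and make it braid-invariant via Lemma~\ref{lem:braid}. The problem is that the set you minimize over is too small, and the step you defer to a ``dihedral lifting argument'' is false as stated. Because you require competitors to satisfy $y\covB yt_\beta\le_B w$, the interior case needs an endpoint root $\eta_r\notin I_R(y)$ with $y\covB yt_{\eta_r}\le_B w$. This fails in general.

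Take type $A_3$, $H=\{s_1,s_2\}$, $x=s_3$ (so $w_Hx=s_1s_2s_1s_3$ is reduced), $w=w_0$ and $y=s_2s_1$.
\begin{itemize}
\item The internal root is $\beta=x^{-1}(\alpha_1+\alpha_2)=\alpha_1+\alpha_2+\alpha_3$. Since $yt_\beta=s_3s_2s_1$, we have $y\covB yt_\beta\le_B w_0$ and $\beta\notin I_R(y)$.
\item The endpoint root $\eta_{s_1}=\alpha_1$ lies in $I_R(y)$.
\item For the other endpoint $\eta_{s_2}=\alpha_2+\alpha_3$, the element $yt_{\eta_{s_2}}=s_2s_1s_3s_2s_3$ has length $5$, so it is not a cover of $y$.
\end{itemize}
So the cover condition does not pass from an internal root to an endpoint root. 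Here the proposition itself still holds via a simple root, but your mechanism for ruling out the interior case has no valid proof. You also never prove your own condition restricted to minimizers.

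\textbf{The missing idea} is to enlarge the extremal set so that the only condition on the root is $\beta\notin I_R(y)$. Minimize $\ell(c)$ over pairs with $c\le_L w$, $ct_\beta\lessdot_L c$ and $\beta\notin I_R(y)$.
\begin{itemize}
\item The interior case is then excluded by $2$-closedness of $I_R(y)$. The internal root is a positive combination of $\eta_s,\eta_t$, so some $\eta_r\notin I_R(y)$. By Lemma~\ref{lem:braid} and \eqref{eq:minicover}, $(q_r,\eta_r)$ is then a competitor with $\ell(q_r)<\ell(c)$.
\item The price is that the Bruhat condition is no longer built into the extremal set. It must be recovered from minimality, and this step is entirely absent from your proposal. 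Put $d=ct_\beta$. Every $\gamma\in I_R(d)$ must lie in $I_R(y)$, since otherwise its entry step along a reduced word of $d$ gives a shorter competitor. Hence $d\le_L y$.
\item Write $y=y'd$, $w=w'd$, $c=s'd$. Lifting gives $y'\le_B w'$, and $s'\in D_R(w')\setminus D_R(y')$. Lifting again gives $y's'\le_B w'$, so $yt_\beta=y's'd\le_B w$ with $\ell(yt_\beta)=\ell(y)+1$.
\end{itemize}

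\textbf{Smaller points.} A braid move in which $\beta$ is an endpoint root of the block is not covered by Lemma~\ref{lem:braid}; it needs \eqref{eq:carrier-inherit}. The inductive scheme in your first paragraph, upgrading $\min\C_{w'}(\beta)$ to $\min\C_w(\beta)$, is left unresolved, though the extremal argument makes it unnecessary.
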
\begin{proof}
Define $\mathscr P=\{(c,\beta):c\le_Lw,\ \beta\in\Phi^+\setminus I_R(y),\ ct_\beta\lessdot_Lc\}$.
This set is nonempty: choose $\beta\in I_R(w)\setminus I_R(y)$ and follow the suffix chain of any reduced expression of $w$ to the step where $\beta$ enters the right inversion set. The upper element of that step, together with $\beta$, belongs to $\mathscr P$. Choose $(c,\beta)\in\mathscr P$ with $\ell(c)$ minimal. We will show that $c$ is the least element of $\C_w(\beta)$.

For a reduced expression $\s=(s_1,\ldots,s_m)$ of $w$, put $w_i=s_i\cdots s_m$ and $w_{m+1}=e$. There is a unique index $j$ with $\beta\in I_R(w_j)\setminus I_R(w_{j+1})$, and then $w_j=w_{j+1}t_\beta$.
Define $P_w(\s)=\min\C_{w_j}(\beta)$, which exists by~\eqref{eq:carrier-cover}. By~\eqref{eq:minicover}, we have $(P_w(\s),\beta)\in\mathscr P$, and therefore $\ell(P_w(\s))\ge\ell(c)$.
Write $c=s'd$ as a reduced factorization with $s'\in S$ and $d=ct_\beta$, and extend it on the left to a reduced expression $\s$ of $w$. The step where $\beta$ enters is then $d<_Lc$, so $P_w(\s)\le_Lc$. The preceding inequality gives $P_w(\s)=c$.

We next show that this equality is preserved by a braid move. Suppose two reduced expressions $\s_1,\s_2$ connected by a braid move differ in positions $k+1,\ldots,l$, where they form two alternating words representing $w_H$ for $H=\{s,t\}$. Set $x=w_{l+1}$ and $z=w_Hx$, and assume $P_w(\s_1)=c$. The roots contributed by this block are $x^{-1}\Phi_H^+$, listed in opposite orders in the two expressions.

If $j<k+1$ or $j>l$, the corresponding suffix and its predecessor are unchanged. Hence $P_w(\s_2)=c$.

If $j=k+1$ or $j=l$, then $\beta=x^{-1}\alpha_r$ for some $r\in H$. Its two occurrences have upper suffixes $rx$ and $z$. As $z>_L rx$, applying \eqref{eq:carrier-inherit} to $rx\in\C_z(\beta)$ gives $P_w(\s_1)=P_w(\s_2)=c$.

Finally, suppose $k+1<j<l$. After relabeling $s$ and $t$ if necessary, put $u=w_{j+1}$ and $v=w_j=su$. Then Lemma~\ref{lem:braid} gives $\ell(q_r)<\ell(P_w(\s_1))=\ell(c)$ for $q_r=\min\C_{rx}(x^{-1}\alpha_r)$ and $r\in H$.
Each $q_r$ is a suffix of $w$ and satisfies $q_rt_{x^{-1}\alpha_r}\lessdot_Lq_r$ by~\eqref{eq:minicover}. Minimality of $\ell(c)$ therefore forces $x^{-1}\alpha_s,x^{-1}\alpha_t\in I_R(y)$. But the interior root $\beta$ is a positive linear combination of these two roots. Since $I_R(y)$ is $2$-closed, this contradicts $\beta\notin I_R(y)$. Thus the interior case cannot occur.

Connectivity under braid moves now gives $P_w(\s)=c$ for every reduced expression of $w$. To deduce that $c$ is the least carrier below $w$, take any $b\in\C_w(\beta)$ and extend a reduced expression of $b$ on the left to one of $w$. Since $\beta\in I_R(b)$, its entry step occurs within the suffix corresponding to $b$. Its upper suffix $w_j$ satisfies $w_j\le_Lb$, and hence $c=P_w(\s)\le_Lw_j\le_Lb$.
As $c\in\C_w(\beta)$, this proves $c=\min\C_w(\beta)$.

It remains to prove $yt_\beta\le_Bw$. We claim that $I_R(d)\subseteq I_R(y)$. Otherwise, choose $\gamma\in I_R(d)\setminus I_R(y)$ and consider its entry step along a reduced expression of $d$. The upper suffix $b\le_Ld$ gives $(b,\gamma)\in\mathscr P$ with $\ell(b)\le\ell(d)<\ell(c)$, a contradiction. Thus $d\le_Ly$.

Write $y=y'd$ and $w=w'd$ reduced. Canceling the common final letters of a reduced expression of $d$ by the lifting property gives $y'\le_Bw'$. Since $c=s'd\le_Lw$, $s'$ is a right descent of $w'$. On the other hand, as $c=s'd\nleq_L y$, $s'$ is not a right descent of $y'$. The right-handed lifting property, obtained by inversion from \cite[Proposition~2.2.7]{bjornerCombinatoricsCoxeterGroups2005}, yields $y's'\le_Bw'$. Moreover, $\beta\notin I_R(y)$ gives $\ell(yt_\beta)>\ell(y)$, while $yt_\beta=y's'd$ has an expression of length $\ell(y)+1$. This expression is therefore reduced. By the subword property, a reduced expression of $y's'$ occurs as a subword of a reduced expression of $w'$. Appending a reduced expression of $d$ to both gives $yt_\beta=y'dt_\beta=y's'd\le_Bw$. Since $\ell(yt_\beta)=\ell(y)+1$, this proves $y\lessdot_B yt_\beta\le_Bw$ and completes the proof.
\end{proof}

\subsection{Completion of the proof}
\begin{proof}[Proof of Theorem~\ref{thm:main}]
By Proposition~\ref{prop:closure}, there is a unique $w$ with $\ccl(C)=I(w)$, and $w\le_Rx$. Proposition~\ref{prop:rootselect} and Corollary~\ref{cor:rootreduct} give $[e,w]_Bw^{-1}\subseteq\RR(C)$. Together with Lemma~\ref{lem:IWsubsetew} and $C\subseteq I(w)$, this yields $\RR(C)\subseteq\RR(I(w))\subseteq[e,w]_Bw^{-1}\subseteq\RR(C)$.
Thus all three sets are equal. For any $\gamma\in\Phi^+$,
\[
 \gamma\in\overline C
 \iff t_\gamma\in[e,w]_Bw^{-1}
 \iff t_\gamma w\le_Bw
 \iff\gamma\in I(w),
\]
where the last equivalence follows from the root-sign criterion. Hence $\overline C=I(w)=\ccl(C)$.
\end{proof}

\begin{proof}[Proof of Corollary~\ref{cor:joins}]
Set $C=\bigcup_{i\in J}E_i$.
Since $\pp\setminus C=\bigcap_{i\in J} (\pp\setminus E_i)$ is closed, $C$ is coclosed.
Since $C$ is contained in an inversion set, Theorem~\ref{thm:main} implies $\overline C=\ccl(C)=I(v)$ for some $v\in W$. Thus $\overline C$ is biclosed and is contained in every biclosed set containing $C$. Hence $\overline C=\bigvee_{i\in J} E_i$ in $\BC(\pp)$.
\end{proof}
\section*{Acknowledgment}
AI tools assisted in finding the proof and revising the manuscript; the authors developed the mathematical formulation and exposition. Y.G. is partially supported by NSFC Grant no. 12471309.
\bibliographystyle{plain}
\bibliography{ref}
\end{document}